\theoremstyle{theorem}
\newtheorem{theorem}{Theorem}[section]
\newtheorem{lemma}[theorem]{Lemma}
\newtheorem{corollary}[theorem]{Corollary}
\theoremstyle{definition}
\newtheorem{example}[theorem]{Example}
\newtheorem{question}[theorem]{Question}
\newtheorem{remark}[theorem]{Remark}
\newcommand{\RotE}[1]{\mathrm{Rot} \hskip 0.1em \mathbb{E}^{#1}}
\begin{document}

\title[Torus knot and PL trochoid]{On colorability of knots by rotations, \\ Torus knot and PL trochoid}
\author[Ayumu Inoue]{Ayumu Inoue}
\address{Department of Mathematics Education, Aichi University of Education, Kariya, Aichi 448-8542, Japan}
\email{ainoue@auecc.aichi-edu.ac.jp}

\subjclass[2010]{Primary 57M25; Secondary 51M04}
\keywords{quandle, coloring, Alexander polynomial, torus knot, PL trochoid, Euclidean geometry}

\begin{abstract}
The set consisting of all rotations of the Euclidean plane is equipped with a quandle structure.
We show that a knot is colorable by this quandle if and only if its Alexander polynomial has a root on the unit circle in $\mathbb{C}$.
Further we enumerate all non-trivial colorings of a torus knot diagram by the quandle using PL trochoids.
As an application of these results, we have the complete factorization of the Alexander polynomial of the torus knot.
\end{abstract}

\maketitle

\section{Introduction}
\label{sec:introduction}

A quandle, introduced by Joyce \cite{Joy1982} in 1982\footnote{The same notion was also introduced by Matveev \cite{Mat1982} in 1982, under the name of a distributive groupoid.}, is an algebraic system which is characterized by certain three axioms.
These three axioms are closely related to the three local moves of knot diagrams, known as the Reidemeister moves, respectively.
Therefore a quandle is considered as a minimal algebra for knots \cite{Nel2011}.
Indeed, it is known by Joyce \cite{Joy1982} and Matveev \cite{Mat1982} independently that quandles derived from knots are isomorphic if and only if the knots are equivalent.

On the other hand, a special quandle called a kei (\raisebox{-1.8pt}{\includegraphics[width=10pt]{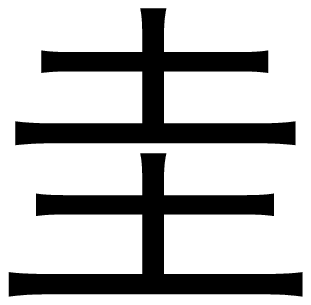}}\footnote{It is a Chinese character which is invariant under reflection about the centerline and invariant under swapping top and bottom.}) was studied by Takasaki \cite{Tak1943} in 1943.
The notion of a kei abstracts the behavior of symmetric transformations.
Takasaki said in his paper that, to investigate geometric symmetry, using a kei is more suitable than using a group.
A quandle also describes primitive symmetry well.
For example, each subset of a group closed under conjugations is equipped with a quandle structure, while it is no longer not equipped with a group structure in general.
This quandle still describes symmetry that the subset does as a part of the group.

How those two aspects of a quandle are related to each other?
In this paper, we focus on the quandle $\RotE{2}$ consisting of all rotations of the Euclidean plane, and investigate colorability of knots by $\RotE{2}$.
Of course, $\RotE{2}$ describes rotational symmetry of the Euclidean plane.
Colorability of knots is related to the first aspect.
We show in Section \ref{sec:necessary_and_sufficient_condition} that a knot is $\RotE{2}$-colorable if and only if its Alexander polynomial has a root on the unit circle in $\mathbb{C}$ (Theorem \ref{thm:necessary_and_sufficient_condition}).
Furthermore, in Section \ref{sec:torus_knot_and_PL_trochoid}, we enumerate all $\RotE{2}$-colorings of a torus knot diagram using PL trochoids (Theorem \ref{thm:torus_knot_and_PL_trochoid}).
As an application of these results, we have the complete factorization of the Alexander polynomial of the torus knot (Corollary \ref{cor:factorization_of_the_Alexander_polynomial_of_the_torus_knot}).

\section{Preliminaries}
\label{sec:preliminaries}

In this section, we review the definitions of a quandle and colorability of a knot by a quandle briefly.
We refer the reader to \cite{Car2012, Joy1982, Nel2011} for more details.
We further define the quandle consisting of all rotations of the Euclidean plane in which we are mainly interested in this paper.

A \emph{quandle} is a non-empty set $X$ equipped with a binary operation $\ast : X \times X \rightarrow X$ satisfying the following three axioms:
\begin{itemize}
 \item[(Q1)] For each $x \in X$, $x \ast x = x$.
 \item[(Q2)] For each $x \in X$, a map $\ast \> x : X \rightarrow X$ ($w \mapsto w \ast x$) is bijective.
 \item[(Q3)] For each $x, y, z \in X$, $(x \ast y) \ast z = (x \ast z) \ast (y \ast z)$.
\end{itemize}
A quandle is said to be a \emph{kei} (\raisebox{-1.8pt}{\includegraphics[width=10pt]{kei.eps}}) if each bijection $\ast \> x$ is involutive.
A typical example of a quandle is a subset $X$ of a group, which is closed under conjugations, with $x \ast y = y^{-1} x y$ for each $x, y \in X$.
We call it a \emph{conjugation quandle}.

Let $\RotE{2}$ be the set consisting of all rotations of the Euclidean plane $\mathbb{E}^{2}$.
Since it is a subset of the isometry group of $\mathbb{E}^{2}$ and is closed under conjugations, $\RotE{2}$ is equipped with a conjugation quandle structure.
This conjugation quandle $\RotE{2}$ is the quandle consisting of all rotations of the Euclidean plane.
We note that $\RotE{2}$ is isomorphic to a quandle $\mathbb{C} \times \mathrm{U}(1)$ whose binary operation $\ast$ is given by
\[
 (z, e^{\theta \sqrt{-1}}) \ast (w, e^{\eta \sqrt{-1}}) = ((z - w) e^{\eta \sqrt{-1}} + w, e^{\theta \sqrt{-1}}).
\]
Under the identification of the complex plane $\mathbb{C}$ with $\mathbb{E}^{2}$, an element $(z, e^{\theta \sqrt{-1}}) \in \mathbb{C} \times \mathrm{U}(1)$ corresponds to a rotation about $z$ by angle $\theta$.
In the remaining of this paper, we do not distinguish $\RotE{2}$ from $\mathbb{C} \times \mathrm{U}(1)$.

The axioms of a quandle are closely related to the Reidemeister moves of knot diagrams as follows.
A \emph{coloring} of a knot diagram $D$ by a quandle $X$ is a map $\{ \textrm{arcs of $D$} \} \rightarrow X$ satisfying the condition depicted in Figure \ref{fig:condition_of_arc_coloring} at each crossing.
We call an element of a quandle assigned to an arc by a coloring a \emph{color} of the arc.
Suppose $D^{\prime}$ is a knot diagram obtained from $D$ by a Reidemeister move.
Then, for each coloring $\mathscr{C}$ of $D$, we have a unique coloring of $D^{\prime}$ whose restriction to the arcs unrelated to the deformation coincides with the restriction of $\mathscr{C}$.
Indeed, the axioms (Q1), (Q2) and (Q3) of a quandle guarantee that we can perform the Reidemeister moves RI, RII and RIII fixing ends' colors respectively.
See Figure \ref{fig:correspondence_of_arc_colorings}.
Thus, for a fixed quandle, the number of all colorings gives rise to a knot invariant.
\begin{figure}[htbp]
\begin{center}
\includegraphics[scale=0.2]{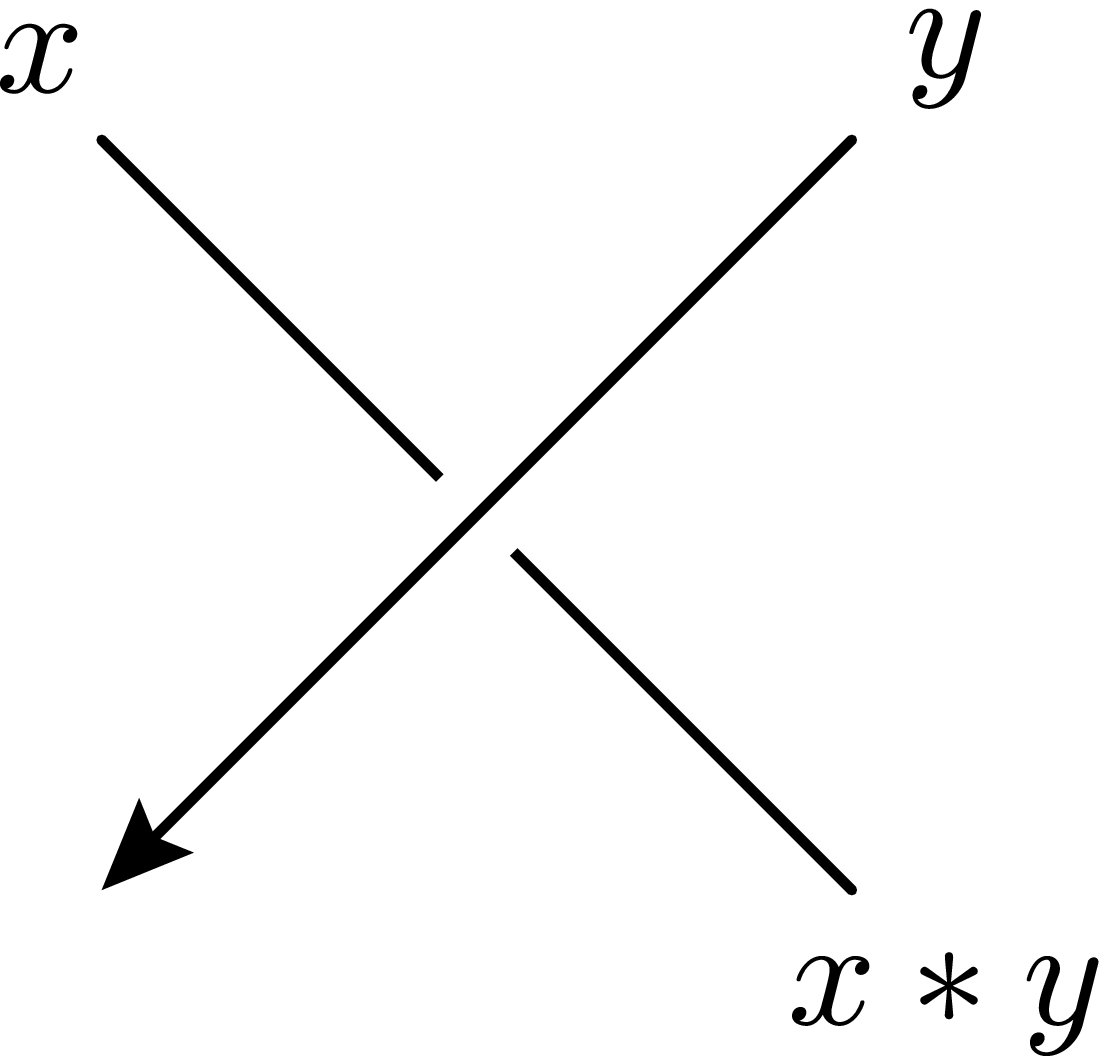}
\end{center}
\caption{}
\label{fig:condition_of_arc_coloring}
\end{figure}
\begin{figure}[htbp]
\begin{center}
\includegraphics[scale=0.2]{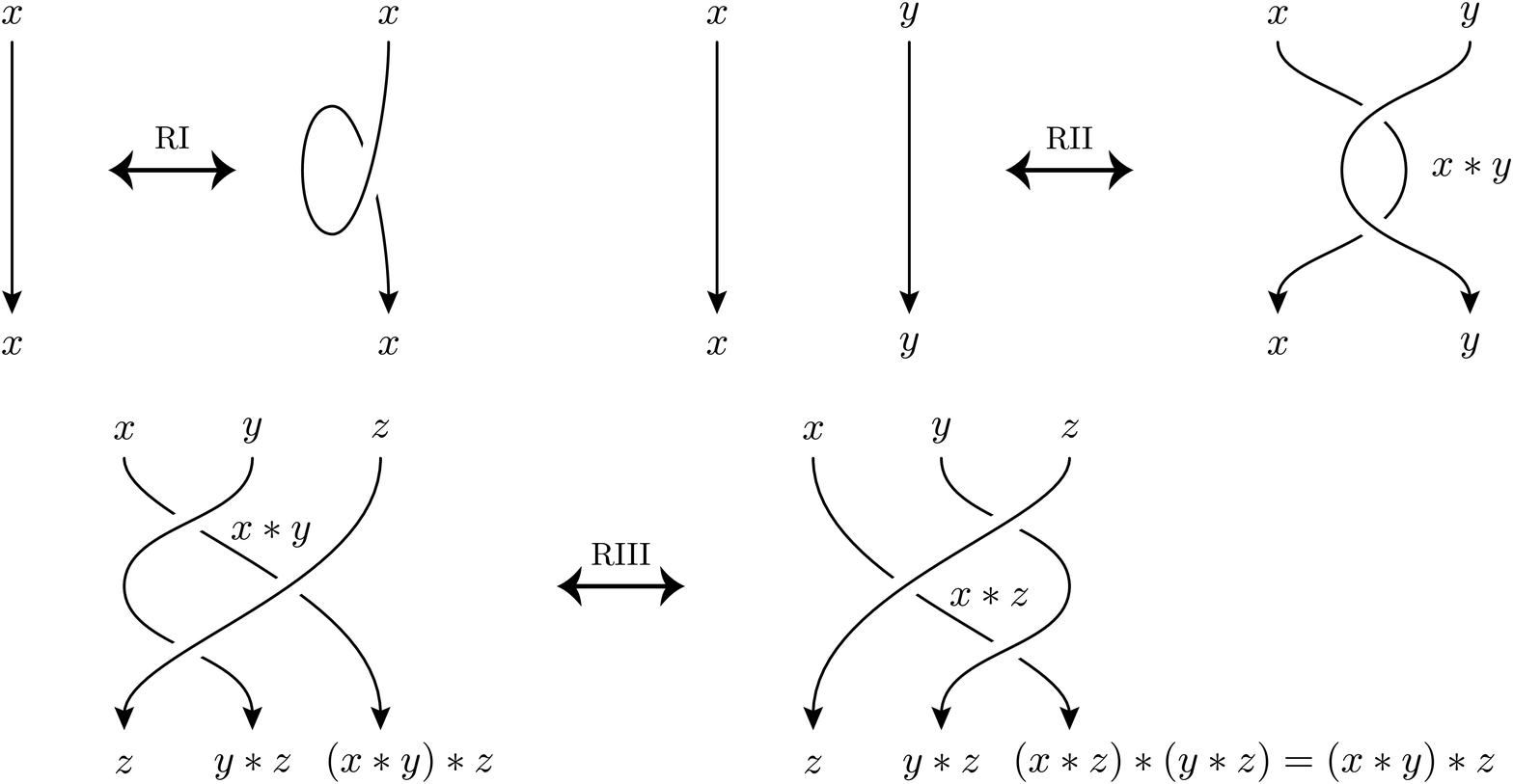}
\end{center}
\caption{}
\label{fig:correspondence_of_arc_colorings}
\end{figure}

A constant map from the set consisting of the arcs of a knot diagram to a quandle always satisfies the coloring condition.
We thus call this constant map a \emph{trivial coloring}.
For a quandle $X$, a knot $K$ is said to be $X$-\emph{colorable} if there is a non-trivial coloring of a diagram of $K$ by $X$.
We note that no non-trivial colorings are obtained from a trivial coloring by Reidemeister moves.

\section{Necessary and sufficient condition}
\label{sec:necessary_and_sufficient_condition}

Suppose $\RotE{2}$ is the quandle consisting of all rotations of the Euclidean plane, defined in the previous section.
Which knots are $\RotE{2}$-colorable?

\begin{example}[trefoil]
Consider a diagram of the right hand trefoil, depicted in the left-hand side of Figure \ref{fig:example_trefoil}, colored by $\RotE{2}$.
Here, $(z_{1}, e^{\theta_{1} \sqrt{-1}})$, $(z_{2}, e^{\theta_{2} \sqrt{-1}})$ and $(z_{3}, e^{\theta_{3} \sqrt{-1}})$ denote colors of the corresponding arcs.
We then have the following equations
\[
 (z_{1}, e^{\theta_{1} \sqrt{-1}}) = (z_{3}, e^{\theta_{3} \sqrt{-1}}) \ast (z_{2}, e^{\theta_{2} \sqrt{-1}}),
\]
\[
 (z_{2}, e^{\theta_{2} \sqrt{-1}}) = (z_{1}, e^{\theta_{1} \sqrt{-1}}) \ast (z_{3}, e^{\theta_{3} \sqrt{-1}}),
\]
\[
 (z_{3}, e^{\theta_{3} \sqrt{-1}}) = (z_{2}, e^{\theta_{2} \sqrt{-1}}) \ast (z_{1}, e^{\theta_{1} \sqrt{-1}})
\]
associated with the crossings.
The first equation requires that $\theta_{1} = \theta_{3}$ and $|z_{1} - z_{2}| = |z_{3} - z_{2}|$.
The other equations require $\theta_{2} = \theta_{1}$, $|z_{2} - z_{3}| = |z_{1} - z_{3}|$, $\theta_{3} = \theta_{2}$ and $|z_{3} - z_{1}| = |z_{2} - z_{1}|$.
These requests are satisfied if we take points $z_{1}, z_{2}, z_{3}$ so that the triangle $\triangle z_{1} z_{2} z_{3}$ is regular and set each $\theta_{i}$ to be $\pi / 3$.
See the right-hand side of Figure \ref{fig:example_trefoil}.
Therefore the right hand trefoil is $\RotE{2}$-colorable.
\begin{figure}[htbp]
\begin{center}
\includegraphics[scale=0.2]{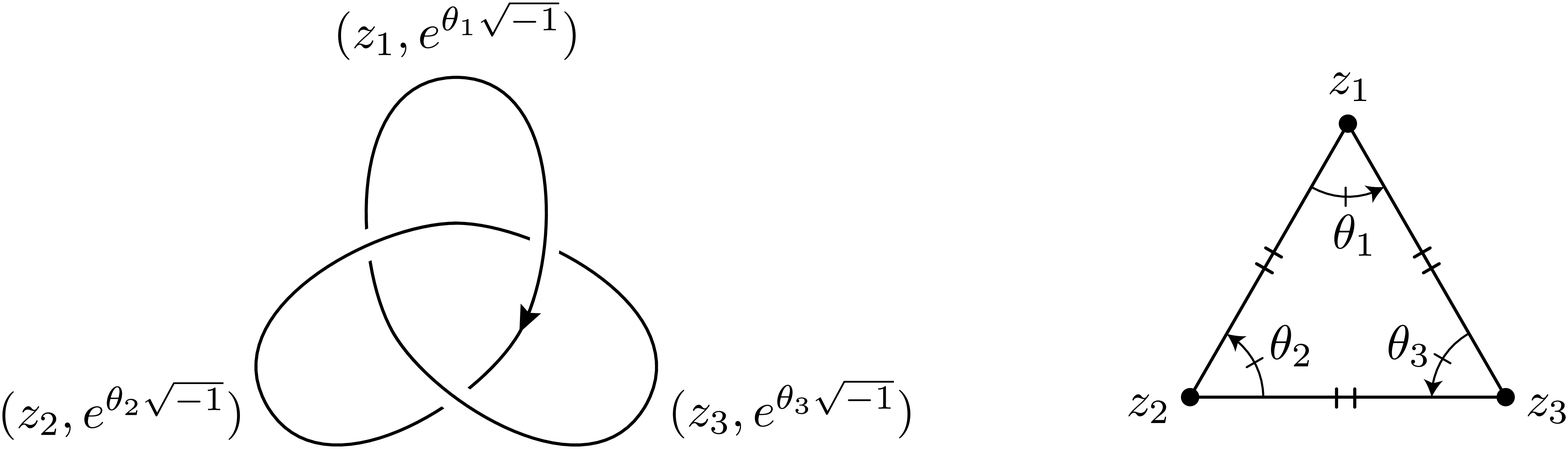}
\end{center}
\caption{}
\label{fig:example_trefoil}
\end{figure}
\end{example}

In the above example, it is required that all $\theta_{i}$ are the same.
The same request is obviously imposed for any knot diagram, because one can go around all arcs of a knot diagram passing under over arcs:

\begin{lemma}
\label{lem:same_angle}
For any knot diagram colored by $\RotE{2}$, the second components {\upshape (}i.e., rotation angles{\upshape )} of colors of the arcs are the same.
\end{lemma}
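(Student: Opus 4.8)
The plan is to exploit the fact that the quandle operation on $\RotE{2} \cong \mathbb{C} \times \mathrm{U}(1)$ never alters the angular coordinate. Reading off the defining formula
\[
 (z, e^{\theta \sqrt{-1}}) \ast (w, e^{\eta \sqrt{-1}}) = ((z - w) e^{\eta \sqrt{-1}} + w, e^{\theta \sqrt{-1}}),
\]
one sees that the second component of $a \ast b$ always equals the second component of $a$, for every $a, b \in \RotE{2}$. By axiom (Q2) the map $\ast \> b$ is a bijection; since it fixes the angular coordinate, so does its inverse. Hence both $\ast$ and $\ast^{-1}$ preserve the second component, and this is really the only computation the proof needs.

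With this in hand, I would trace the colors along the knot. At any crossing, the coloring condition of Figure \ref{fig:condition_of_arc_coloring} relates the color $a$ of the incoming under-arc to that of the outgoing under-arc by $a \mapsto a \ast b$ or $a \mapsto a \ast^{-1} b$, according to the sign of the crossing, where $b$ is the color of the over-arc. The preceding observation shows that these two under-arc colors share the same second component. This is exactly the requirement $\theta_{1} = \theta_{3}$, and its analogues, already recorded in the trefoil example.

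Finally, because a knot is a single closed curve, following it once sweeps through every arc of the diagram, and two consecutive arcs are separated precisely by an under-crossing. The angular coordinate is unchanged at each such passage, so it stays constant throughout the whole traversal; therefore the second components of all arc colors coincide. The argument is purely formal once the angular coordinate is seen to be invariant under $\ast$ and $\ast^{-1}$, so I expect no genuine obstacle beyond verifying that single invariance (in particular the invariance under $\ast^{-1}$, needed for crossings of the opposite sign, which follows immediately from (Q2) together with the explicit formula).
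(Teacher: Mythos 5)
Your proposal is correct and matches the paper's own (very brief) justification: the paper simply notes that the statement holds ``because one can go around all arcs of a knot diagram passing under over arcs,'' which is exactly your traversal argument combined with the observation that $\ast$ and its inverse preserve the second component. Your write-up just makes explicit the small verification about $\ast^{-1}$ that the paper leaves implicit.
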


\begin{example}[figure eight knot]
\label{ex:figure_eight_knot}
We next consider a diagram of the figure eight knot, depicted in the left-hand side of Figure \ref{fig:example_figure_eight_knot}, colored by $\RotE{2}$.
In this case, we have the following equations
\[
 (z_{2}, e^{\theta \sqrt{-1}}) = (z_{1}, e^{\theta \sqrt{-1}}) \ast (z_{4}, e^{\theta \sqrt{-1}}), \enskip
 (z_{4}, e^{\theta \sqrt{-1}}) = (z_{3}, e^{\theta \sqrt{-1}}) \ast (z_{2}, e^{\theta \sqrt{-1}}),
\]
\[
 (z_{2}, e^{\theta \sqrt{-1}}) = (z_{3}, e^{\theta \sqrt{-1}}) \ast (z_{1}, e^{\theta \sqrt{-1}}), \enskip
 (z_{4}, e^{\theta \sqrt{-1}}) = (z_{1}, e^{\theta \sqrt{-1}}) \ast (z_{3}, e^{\theta \sqrt{-1}})
\]
associated with the crossings.
The first two equations require that $|z_{1} - z_{4}| = |z_{4} - z_{2}| = |z_{2} - z_{3}|$ and $z_{1} z_{4} \parallel z_{2} z_{3}$.
Further the last two equations require that $|z_{2} - z_{1}| = |z_{1} - z_{3}| = |z_{3} - z_{4}|$ and $z_{2} z_{1} \parallel z_{3} z_{4}$.
These requests are depicted in the right-hand side of Figure \ref{fig:example_figure_eight_knot} respectively.
There are no arrangements of points $z_{1}, z_{2}, z_{3}, z_{4}$ other than $z_{1} = z_{2} = z_{3} = z_{4}$ which satisfy the requests simultaneously.
Therefore the figure eight knot is not $\RotE{2}$-colorable.
\begin{figure}[htbp]
\begin{center}
\includegraphics[scale=0.2]{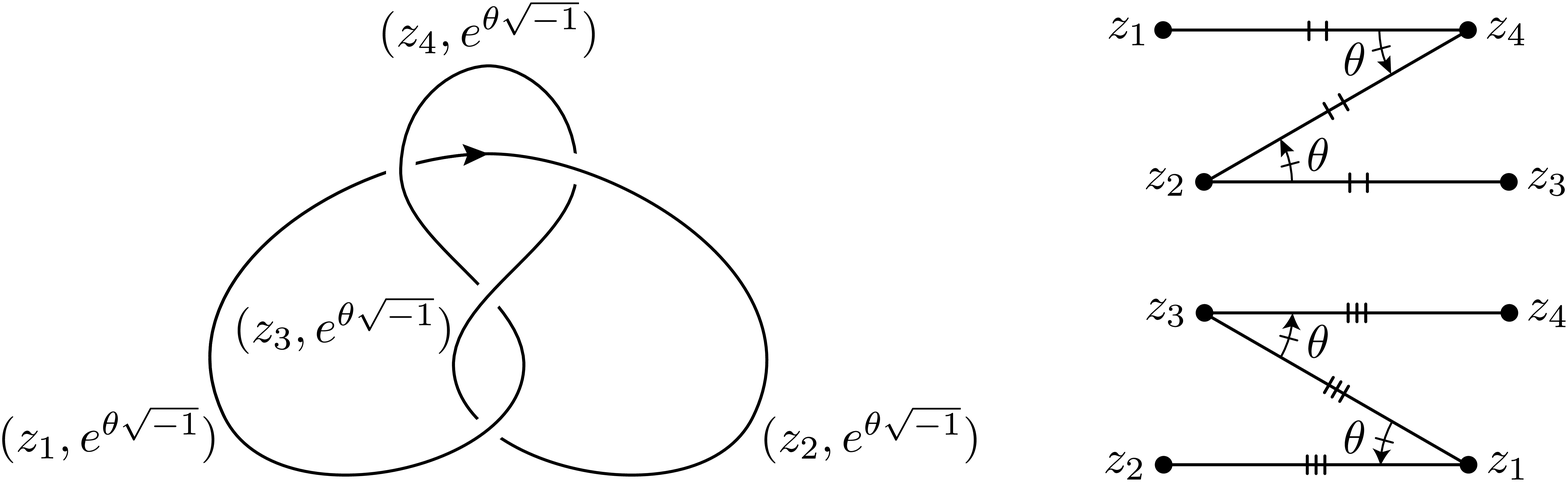}
\end{center}
\caption{}
\label{fig:example_figure_eight_knot}
\end{figure}
\end{example}

We note that the Alexander polynomial of the trefoil ($\RotE{2}$-colorable) is
\[
 \Delta_{3_{1}}(t) = t^{2} - t + 1 = \left( t - \exp \left( \dfrac{\pi \sqrt{-1}}{3} \right) \right) \left( t - \exp \left(- \dfrac{\pi \sqrt{-1}}{3} \right) \right),
\]
and that of the figure eight knot (not $\RotE{2}$-colorable) is
\[
 \Delta_{4_{1}}(t) = t^{2} - 3 t + 1 = \left( t - \dfrac{3 - \sqrt{5}}{2} \right) \left( t - \dfrac{3 + \sqrt{5}}{2} \right).
\]

\begin{theorem}
\label{thm:necessary_and_sufficient_condition}
A knot $K$ is $\RotE{2}$-colorable if and only if its Alexander polynomial $\Delta_{K}(t)$ has a root on the unit circle in $\mathbb{C}$.
More precisely, there is a non-trivial coloring of a diagram of $K$ by $\RotE{2}$, whose rotation angles are $\theta$, if and only if $\Delta_{K}(e^{\theta \sqrt{-1}}) = 0$.
\end{theorem}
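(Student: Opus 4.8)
The plan is to turn $\RotE{2}$-colorings into solutions of a linear system over $\mathbb{C}$ and then to recognize that system as the one whose degeneracy is measured by the Alexander polynomial. First I would invoke Lemma \ref{lem:same_angle} to fix a single rotation angle $\theta$ for every arc and abbreviate $t = e^{\theta \sqrt{-1}}$. Writing the color of an arc as $(z_a, t)$, the coloring condition at a crossing whose over-arc is $b$ and whose under-arc runs from $a$ to $c$ reads $(z_c, t) = (z_a, t) \ast (z_b, t)$, which by the formula for $\ast$ collapses to the single scalar equation $z_c = t\, z_a + (1 - t)\, z_b$, i.e. $z_c - t\, z_a - (1-t)\, z_b = 0$. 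Thus a $\RotE{2}$-coloring of the diagram with angle $\theta$ is exactly a vector $\mathbf{z} = (z_a)_a \in \mathbb{C}^{n}$, where $n$ is the number of arcs (equivalently, of crossings), satisfying $M(t)\,\mathbf{z} = 0$; here $M(t)$ is the $n \times n$ matrix recording the above relations, one per crossing. These relations are precisely the Fox derivatives of the Wirtinger relators, abelianized with every meridian sent to $t$, so $M(t)$ is the Alexander matrix of the diagram specialized at $t = e^{\theta\sqrt{-1}}$.

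Next I would isolate the trivial colorings. A trivial coloring assigns one element of $\RotE{2}$ to every arc; since the angles already agree, this forces all centers $z_a$ to coincide, that is, $\mathbf{z}$ is a scalar multiple of $\mathbf{1} = (1, \dots, 1)$. The vector $\mathbf{1}$ always lies in $\ker M(t)$, because every row of $M(t)$ has coefficient sum $1 - t - (1-t) = 0$ (the constant assignment $z_a = z_b = z_c$ solves each relation). Consequently a \emph{non-trivial} coloring with angle $\theta$ exists if and only if the system admits a non-constant solution, i.e. if and only if $\dim_{\mathbb{C}} \ker M(t) \geq 2$, equivalently $\operatorname{rank} M(t) \leq n-2$, equivalently every $(n-1)\times(n-1)$ minor of $M(t)$ vanishes at $t = e^{\theta\sqrt{-1}}$.

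It then remains to match this minor condition with the vanishing of $\Delta_K$. Here I would appeal to the classical fact that, for a knot, the $(n-1)\times(n-1)$ minors of the Alexander matrix generate the first elementary ideal, which is principal and equal to $(\Delta_K(t))$. Since the minors and $\Delta_K$ generate the same ideal of $\mathbb{Z}[t, t^{-1}] \subset \mathbb{C}[t, t^{-1}]$, they have the same common zeros: all minors vanish at a point $t_0$ if and only if $\Delta_K(t_0) = 0$. Taking $t_0 = e^{\theta\sqrt{-1}}$ yields the refined statement, that a non-trivial coloring with rotation angle $\theta$ exists exactly when $\Delta_K(e^{\theta\sqrt{-1}}) = 0$; ranging $\theta$ over $[0, 2\pi)$ then gives the first assertion, since a root of $\Delta_K$ on the unit circle is precisely a value $e^{\theta\sqrt{-1}}$ with $\Delta_K(e^{\theta\sqrt{-1}}) = 0$.

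The step I expect to be the main obstacle is the \emph{simultaneous} vanishing of all the minors. Knowing only that one distinguished deleted-row-and-column determinant equals $\Delta_K$ up to a unit controls the rank through a single minor, whereas the conclusion $\operatorname{rank} M(t_0) \leq n-2$ requires every minor to drop out at once; the principality of the first elementary ideal is exactly what licenses this. If one prefers to argue from scratch rather than cite it, the same conclusion follows from the adjugate identity together with the observation that the redundant Wirtinger relator supplies a left null vector $\mathbf{c} = (c_1, \dots, c_n)$ of $M(t)$ whose entries are all units. Indeed, when $\operatorname{rank} M(t) = n-1$ the adjugate is the nonzero rank-one matrix $\operatorname{adj} M(t) = \nu\, \mathbf{1}\, \mathbf{c}^{\mathsf{T}}$ with $\nu \neq 0$, so the distinguished minor $\det N(t) = (\operatorname{adj} M(t))_{nn} = \nu\, c_n \neq 0$ equals $\pm t^{k}\Delta_K(t)$ up to a unit; contrapositively, $\Delta_K(t_0) = 0$ forces $\operatorname{rank} M(t_0) \leq n-2$, as required.
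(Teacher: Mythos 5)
Your proposal is correct and follows essentially the same route as the paper: translate the coloring condition into the linear system given by the Alexander matrix specialized at $t = e^{\theta\sqrt{-1}}$, observe that trivial colorings span a one-dimensional kernel, and characterize non-trivial colorings by the rank dropping below $n-1$, which is governed by the vanishing of $\Delta_K$. The only difference is that you are more explicit than the paper about why \emph{all} $(n-1)\times(n-1)$ minors vanish simultaneously (principality of the first elementary ideal, or the adjugate argument), a point the paper compresses into the statement that $\Delta_K$ is the gcd of the minors.
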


\begin{proof}
Let $D$ be a diagram of $K$, $a_{1}, a_{2}, \dots, a_{n}$ the arcs of $D$, $c_{i}$ the crossing of $D$ from which $a_{i}$ starts, and $\varepsilon_{i}$ the sign of $c_{i}$.
Suppose $a_{k_{i}}$ and $a_{l_{i}}$ denote the under arc other than $a_{i}$ and the over arc related to $c_{i}$ respectively.
See figure \ref{fig:crossings}.
\begin{figure}[htbp]
\begin{center}
\includegraphics[scale=0.2]{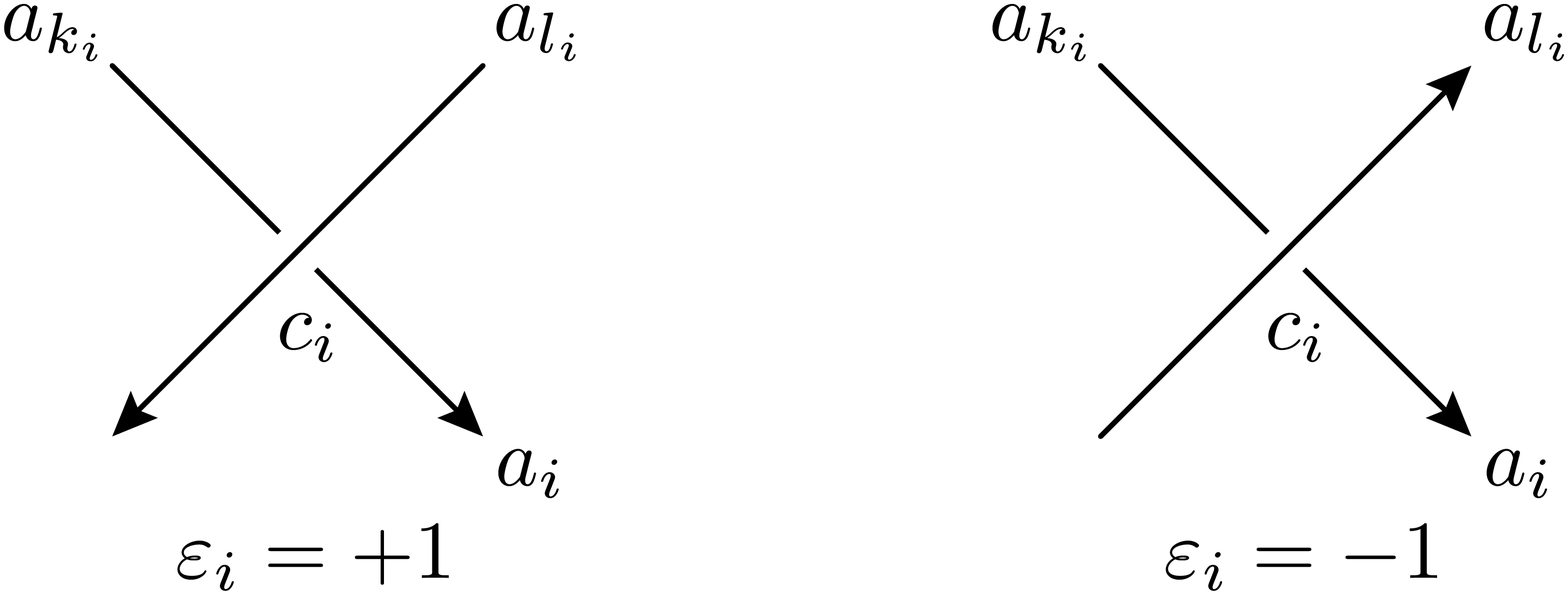}
\end{center}
\caption{}
\label{fig:crossings}
\end{figure}

For a fixed $\theta \in \mathbb{R}$, consider an $n \times n$ matrix $X_{\theta}$ whose $(k_{i}, i)$ entry is $\exp(\varepsilon_{i} \theta \sqrt{-1})$, $(l_{i}, i)$ entry is $1 - \exp(\varepsilon_{i} \theta \sqrt{-1})$, $(i, i)$ entry is $-1$, and otherwise is $0$.
Then a map $a_{i} \mapsto (z_{i}, \theta)$ is a $\RotE{2}$-coloring of $D$ if and only if $(z_{1}, z_{2}, \dots, z_{n}) X_{\theta}$ is equal to $(0, 0, \dots, 0)$.
Since we always have trivial colorings, the rank of $X_{\theta}$ is at most $n - 1$.
There is a non-trivial $\RotE{2}$-coloring, whose rotation angles are $\theta$, if and only if the rank of $X_{\theta}$ is less than $n - 1$.

On the other hand, let $A_{D}$ be the Alexander matrix for $K$ derived from the Wirtinger presentation of the knot group related to $D$ with Fox's free differential calculus and the abelization of the knot group.
See \cite[Theorem 9.10]{BZ2003}, for example.
Actually, $A_{D}$ is an $n \times n$ matrix whose $(k_{i}, i)$ entry is $t^{\varepsilon_{i}}$, $(l_{i}, i)$ entry is $1 - t^{\varepsilon_{i}}$, $(i, i)$ entry is $-1$, and otherwise is $0$.
Since $X_{\theta} = A_{D}|_{t = \exp(\theta \sqrt{-1})}$ and $\Delta_{K}(t)$ is the greatest common divisor of all $(n - 1) \times (n - 1)$ minors of $A_{D}$, the rank of $X_{\theta}$ is less than $n - 1$ if and only if $e^{\theta \sqrt{-1}}$ is a root of $\Delta_{K}(t)$.
\end{proof}

\begin{remark}
\label{rem:rank}
For each $i$ ($0 \leq i \leq n - 2$), the greatest common divisor of all $(n - i - 1) \times (n - i - 1)$ minors of $A_{D}$ is called the $i$-th Alexander polynomial of $K$ and is denoted by $\Delta_{K}^{(i)}(t)$.
Thus $\Delta_{K}^{(0)}(t) = \Delta_{K}(t)$.
We further let $\Delta_{K}^{(n-1)}(t) = 1$.
Since the elementary divisors of $A_{D}$ are $0$ and $e_{K}^{(i)}(t) = \Delta_{K}^{(i)}(t) / \Delta_{K}^{(i+1)}(t)$ ($0 \leq i \leq n - 2$), we have the following equation:
\[
 \mathrm{rank} \hskip 0.1em X_{\theta} = n - 1 - |\{ i \mid e_{K}^{(i)}(e^{\theta \sqrt{-1}}) = 0 \enskip (0 \leq i \leq n - 2) \}|.
\]
\end{remark}

\begin{remark}
\label{rem:similarity}
Assume that the rank of $X_{\theta}$ is $n - 2$.
Let $a_{i} \mapsto (z_{i}, \theta)$ be a non-trivial $\RotE{2}$-coloring of $D$.
Then, by the assumption, any non-trivial $\RotE{2}$-coloring, whose rotation angles are $\theta$, is given by $a_{i} \mapsto (\alpha z_{i} + \beta, \theta)$ with some $\alpha \in \mathbb{C} \setminus \{ 0 \}$ and $\beta \in \mathbb{C}$.
We note that corrections of points $\{ \alpha z_{1} + \beta, \alpha z_{2} + \beta, \dots, \alpha z_{n} + \beta \}$ are related to each other by orientation preserving similarities.
Therefore a non-trivial $\RotE{2}$-coloring, whose rotation angles are $\theta$, is unique up to orientation preserving similarities in this case.
\end{remark}

\begin{remark}
Let $G$ be the group consisting of all orientation preserving similarities of $\mathbb{C}$.
Burde and Zieschang showed that there is a non-trivial representation of the knot group to $G$ mapping a positive meridian to a rotation if and only if the Alexander polynomial of the knot has a root on the unit circle in $\mathbb{C}$ (Proposition 14.5 in \cite{BZ2003}).
Since $\RotE{2}$ is also a subset of $G$ closed under conjugations, in the light of Lemmas 3.3 and 3.5 in \cite{Ino2010}, there is a one-to-one correspondence between a $\RotE{2}$-coloring and a representation mapping a positive meridian to a rotation.
Therefore Theorem \ref{thm:necessary_and_sufficient_condition} is essentially no different from Proposition 14.5 in \cite{BZ2003}.
\end{remark}

\section{Torus knot and PL trochoid}
\label{sec:torus_knot_and_PL_trochoid}

The purpose of this section is to enumerate all non-trivial $\RotE{2}$-colorings of a torus knot diagram.
Since the Alexander polynomial of the $(p, q)$-torus knot is
\[
 \Delta_{T(p, q)}(t) = \dfrac{(t^{pq} - 1) (t - 1)}{(t^{p} - 1) (t^{q} - 1)},
\]
in the light of Theorem \ref{thm:necessary_and_sufficient_condition}, the $(p, q)$-torus knot is $\RotE{2}$-colorable.
Indeed, each root of $\Delta_{T(p, q)}(t)$ is obviously a root of unity.

In order to achieve our goal, we need the following notations.
Let $\Pi(m)$ be a convex regular $m$-gon in $\mathbb{C}$ ($m \geq 2$) and $v^{m}_{0}, v^{m}_{1}, \dots, v^{m}_{m-1}$ the vertices of $\Pi(m)$ in counterclockwise order.
For each $k$ and $i$ ($1 \leq k \leq m - 1$, $0 \leq i \leq m - 1$), we suppose that $v^{m, k}_{i}$ denotes the vertex $v^{m}_{[ik]}$, where $[r]$ denotes the integer satisfying $0 \leq [r] \leq m - 1$ and $[r] \equiv r \pmod m$  for each $r \in \mathbb{Z}$.
Let $\Pi(m, k)$ be the regular polygon in $\mathbb{C}$ obtained by joining vertices $v^{m, k}_{[i]}$ and $v^{m, k}_{[i+1]}$ for each $i$.
We note that some vertices and edges of $\Pi(m, k)$ are overlapped if $m$ and $k$ are not coprime.
Further $\Pi(m, m - k)$ is the mirror image of $\Pi(m, k)$.
Various $\Pi(5, \bullet)$ and $\Pi(6, \bullet)$ are depicted in Figure \ref{fig:regular_polygons}, for example.
It is easy to see that
\[
 \angle v^{m, k}_{[i]} v^{m, k}_{[i+1]} v^{m, k}_{[i+2]} = \left| \dfrac{(m - 2 k) \pi}{m} \right|
\]
for each $i$.
\begin{figure}[htbp]
\begin{center}
\includegraphics[scale=0.2]{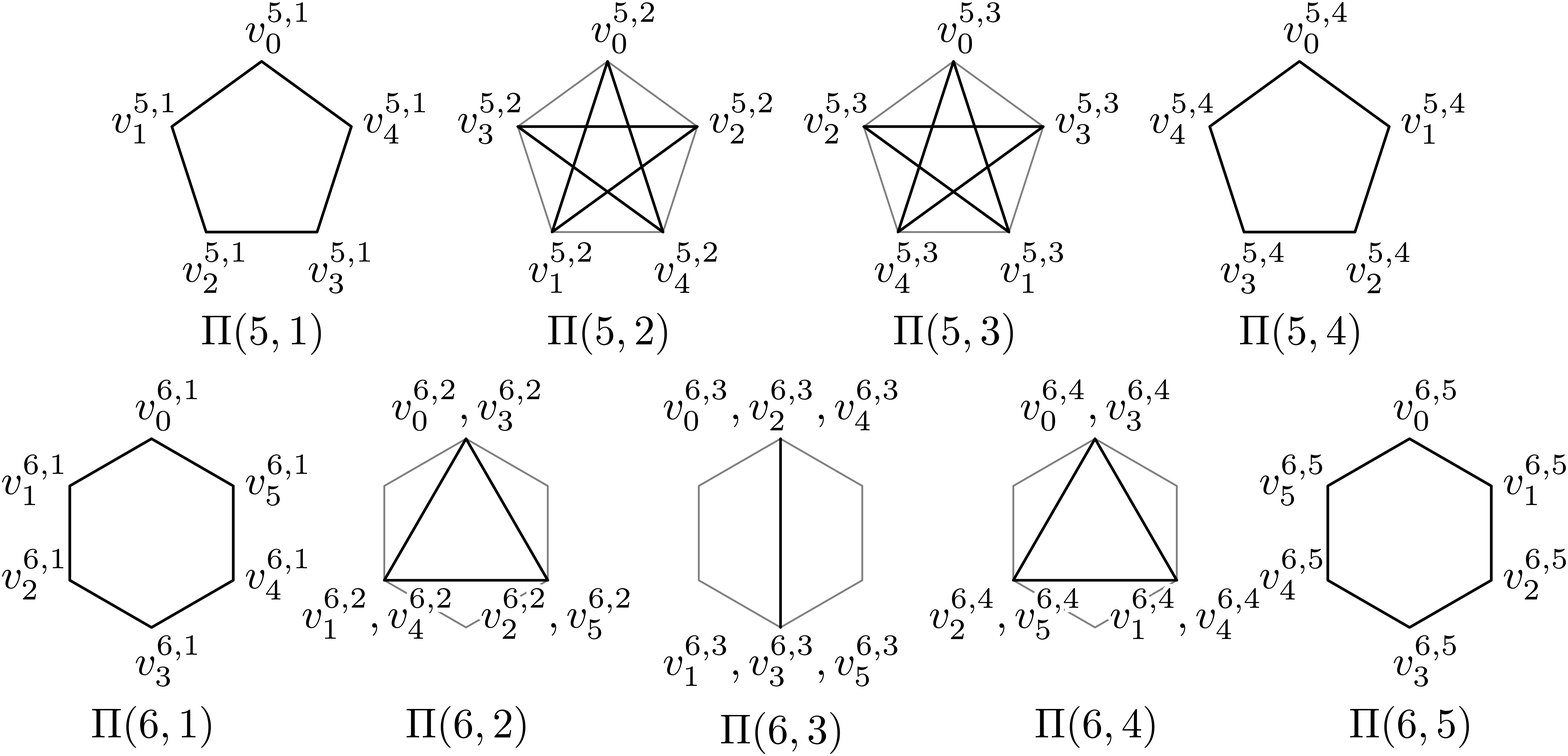}
\end{center}
\caption{}
\label{fig:regular_polygons}
\end{figure}

Suppose $m$, $n$, $k$ and $l$ are integers with $m, n \geq 2$, $1 \leq k \leq m - 1$ and $1 \leq l \leq n - 1$.
We define a real number $\theta(m, k; n, l)$ by
\[
 \theta(m, k; n, l) = \left( \dfrac{m - 2 k}{m} - \dfrac{n - 2 l}{n} \right) \pi.
\]
Consider two regular polygons $\Pi(m, k)$ and $\Pi(n, l)$ whose side lengths are the same.
We arrange $\Pi(m, k)$ and $\Pi(n, l)$ so that $v^{m, k}_{[0]} = v^{n, l}_{[0]}$ and $v^{m, k}_{[1]} = v^{n, l}_{[1]}$.
Fixing the position of $\Pi(n, l)$, we first rotate $\Pi(m, k)$ about $v^{m, k}_{[1]} = v^{n, l}_{[1]}$ by angle $\theta(m, k; n, l)$.
Then $v^{m, k}_{[2]}$ meets with $v^{n, l}_{[2]}$.
Thus we next rotate $\Pi(m, k)$ about $v^{m, k}_{[2]} = v^{n, l}_{[2]}$ instead of $v^{m, k}_{[1]} = v^{n, l}_{[1]}$ by angle $\theta(m, k; n, l)$, which produces that $v^{m, k}_{[3]} = v^{n, l}_{[3]}$.
We continue this process in the same manner.
The $i$-th step is the rotation of $\Pi(m, k)$ about $v^{m, k}_{[i]} = v^{n, l}_{[i]}$ by angle $\theta(m, k; n, l)$.
We call this motion of $\Pi(m, k)$ related to $\Pi(n, l)$ the \emph{$(m, k; n, l)$-trochoid}.
Figure \ref{fig:4_1_3_1_trochoid} illustrates the $(4, 1; 3, 1)$-trochoid, for example.
\begin{figure}[htbp]
\begin{center}
\includegraphics[scale=0.2]{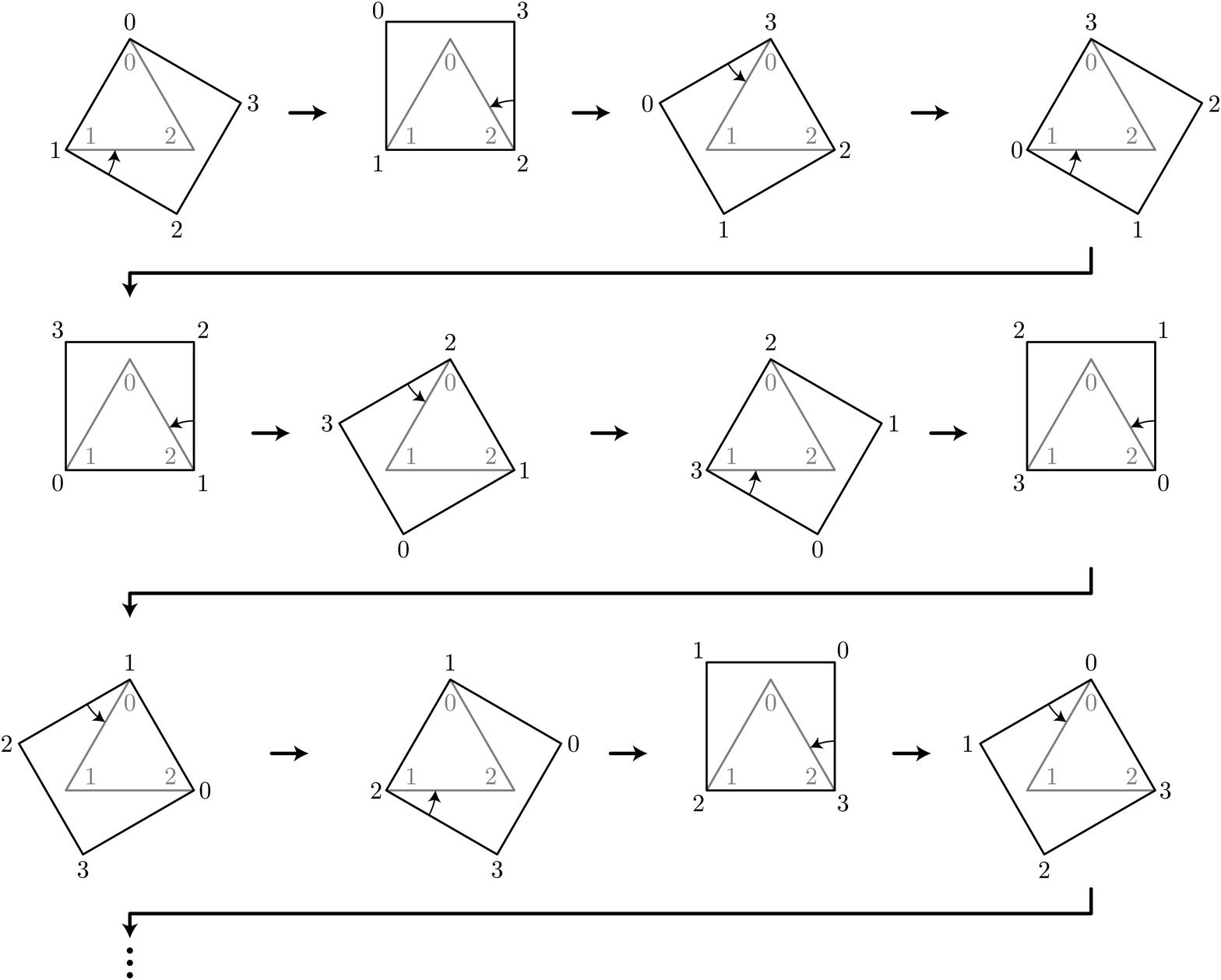}
\end{center}
\caption{}
\label{fig:4_1_3_1_trochoid}
\end{figure}

\begin{theorem}
\label{thm:torus_knot_and_PL_trochoid}
Let $p$ and $q$ be non-zero coprime integers and $D(p, q)$ a diagram of the $(p, q)$-torus knot depicted in Figure \ref{fig:torus_knot_diagram}.
Then, for each $k$ and $l$ {\upshape (}$1 \leq k \leq |p| - 1$, $1 \leq l \leq |q| - 1${\upshape )}, there is a non-trivial $\RotE{2}$-coloring of $D(p, q)$ derived from the $(|p|, k; |q|, l)$-trochoid.
Rotation angles of the non-trivial coloring are $\theta(|p|, k; |q|, l)$.
\begin{figure}[htbp]
\begin{center}
\includegraphics[scale=0.163]{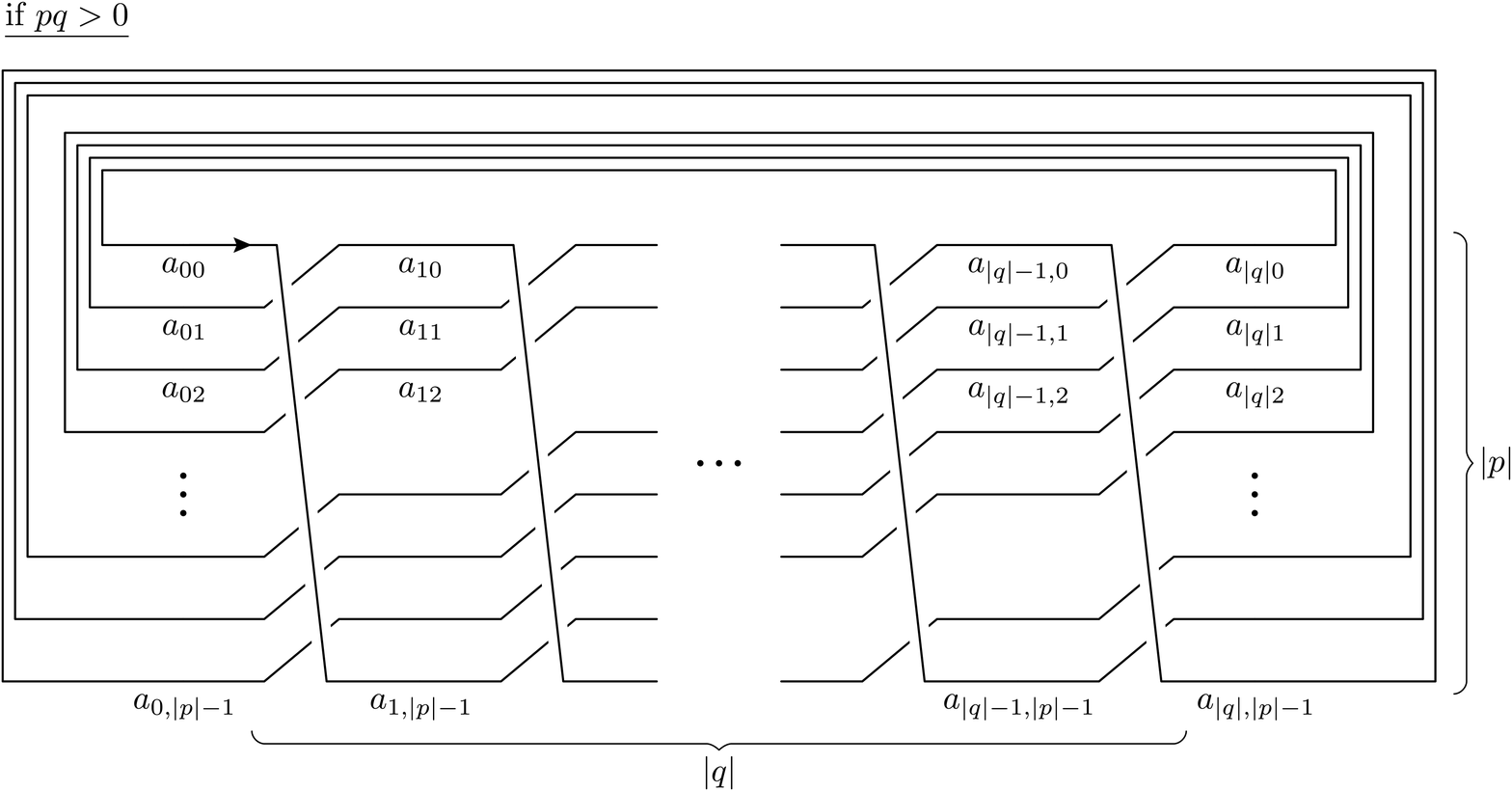}

\vspace{3ex}

\includegraphics[scale=0.163]{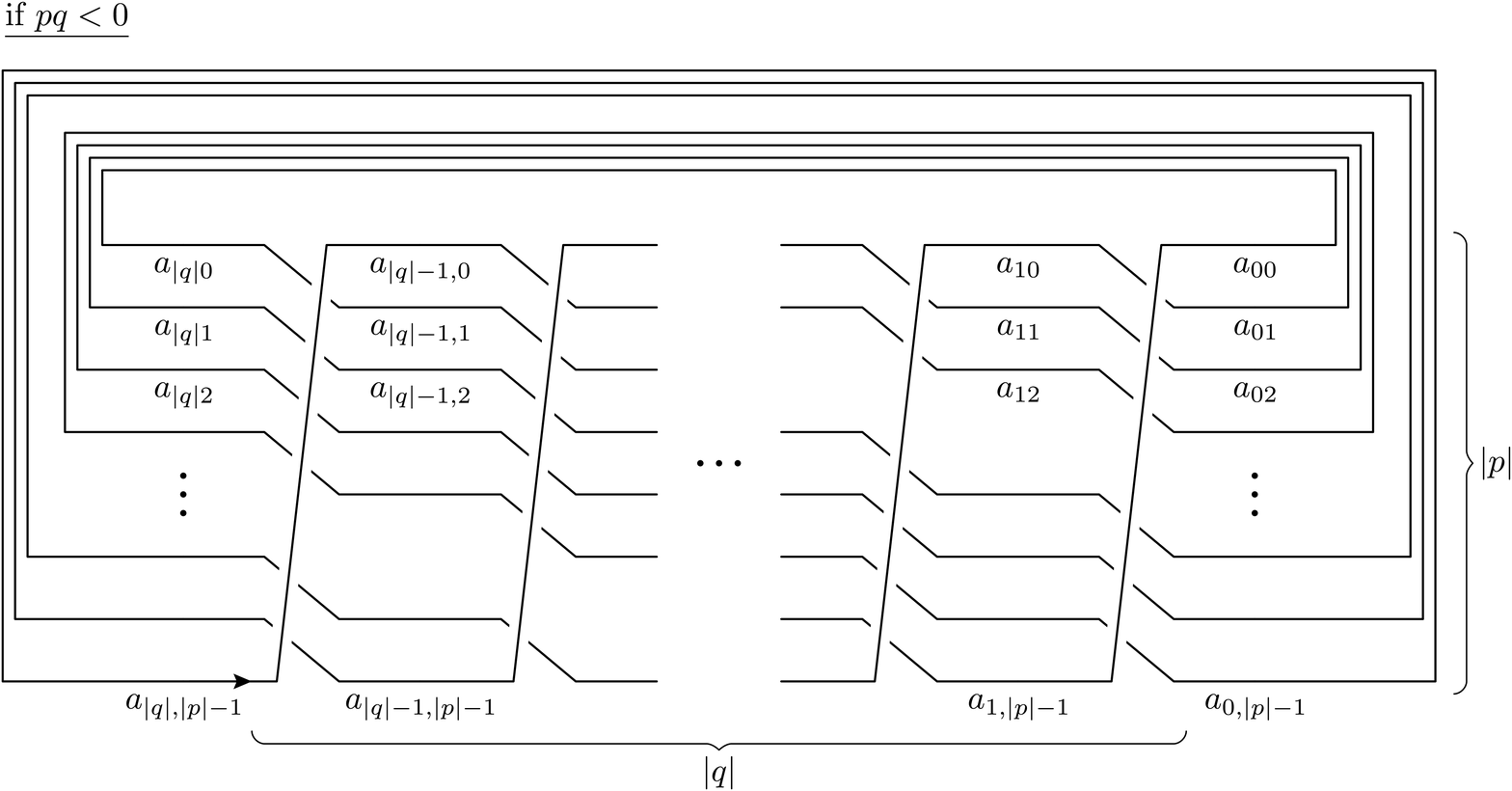}
\end{center}
\caption{}
\label{fig:torus_knot_diagram}
\end{figure}
\end{theorem}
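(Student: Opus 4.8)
The plan is to build the coloring explicitly out of the trochoid and check the crossing condition by hand. First I would reduce to the case $p, q > 0$: a diagram of the $(-p,q)$-torus knot is the mirror of $D(p,q)$, reflection reverses every crossing sign and negates every rotation angle, and since $\Pi(m,m-k)$ is the mirror image of $\Pi(m,k)$ a reflected $(|p|,k;|q|,l)$-trochoid is a $(|p|,|p|-k;|q|,|q|-l)$-trochoid, whose angle is $-\theta(|p|,k;|q|,l)$; so all sign combinations follow from the positive case by this symmetry. With $p,q>0$ fixed, Lemma \ref{lem:same_angle} tells us a coloring assigns one common angle $\theta$ to every arc, so putting $\theta=\theta(p,k;q,l)$ a coloring is merely an assignment of a complex number to each arc such that, at each crossing with over-arc colour $w$, incoming under-arc colour $z$, outgoing under-arc colour $z'$ and sign $\varepsilon$, we have $z' = (z-w)e^{\varepsilon\theta\sqrt{-1}} + w$, i.e.\ $z'$ is $z$ rotated about $w$ by $\varepsilon\theta$. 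In $D(p,q)$ all crossings share one sign, which I normalise to $+1$.

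The heart of the argument is the observation that travelling once along the knot and repeatedly applying this crossing relation is exactly the motion defining the $(p,k;q,l)$-trochoid: one is successively rotated by $\theta$ about pivots, and the pivots are the over-arc colours. Concretely, I would colour the arcs so that, read in the order met along $K$, the over-arc colours run through the vertices $v^{q,l}_{[0]}, v^{q,l}_{[1]}, \dots$ of the stationary polygon $\Pi(q,l)$ (the pivots of the trochoid), while the under-arc colours are the successive positions occupied by the vertices of the moving polygon $\Pi(p,k)$. Under this identification the relation $z' = (z-w)e^{\theta\sqrt{-1}} + w$ at the $i$-th crossing becomes precisely the statement that the $i$-th step of the trochoid rotates $\Pi(p,k)$ about the common vertex $v^{p,k}_{[i]}=v^{q,l}_{[i]}$ by angle $\theta(p,k;q,l)$ --- which holds by the very definition of the trochoid. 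Thus every crossing condition is satisfied as soon as the arcs are matched to trochoid positions in the right order.

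The two genuinely delicate points are the combinatorial matching and the consistency (closure) of the assignment. For the matching I would label the arcs by following the strand as it winds $p$ times one way and $q$ times the other, and verify that the over/under pattern of $D(p,q)$ is compatible with ``pivots on $\Pi(q,l)$, trajectory on $\Pi(p,k)$''. For closure I must check that when the strand returns to its start the accumulated rotation is trivial, so the colour is single-valued; this is where $\gcd(p,q)=1$ enters, forcing the trochoid to close up after the full traversal. Equivalently, $e^{\theta(p,k;q,l)\sqrt{-1}} = \exp\!\big(2\pi\sqrt{-1}\,(lp-kq)/(pq)\big)$ is a root of unity of the appropriate order, which is exactly the condition that it be a root of $\Delta_{T(p,q)}(t)$; this dovetails with Theorem \ref{thm:necessary_and_sufficient_condition} and guarantees well-definedness.

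Finally, non-triviality is immediate: since $1\le k\le p-1$ and $1\le l\le q-1$, the polygons $\Pi(p,k)$ and $\Pi(q,l)$ are non-degenerate, so the assigned colours are not all a single point, and by construction the common rotation angle is $\theta(p,k;q,l)$. I expect the main obstacle to be the combinatorial bookkeeping of the third paragraph --- correctly indexing the arcs of $D(p,q)$ against the trochoid vertices, confirming the over/under pattern, and proving the motion closes --- whereas the geometric content of each single crossing relation is immediate from the definition of the trochoid.
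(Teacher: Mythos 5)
Your proposal follows essentially the same route as the paper: the colors are the successive positions of the vertices of the moving polygon $\Pi(|p|, k)$, the over-arc colors are the pivots lying on $\Pi(|q|, l)$, each crossing relation is exactly one step of the trochoid, and the combinatorial bookkeeping you defer is carried out in the paper by the explicit assignment $z_{ij} = {}$ position of $v^{|p|, k}_{[i+j+1]}$ after the $i$-th step. One small correction to your closure step: the accumulated rotation after the $|q|$-th step is \emph{not} trivial --- rather, $\Pi(|p|, k)$ returns to its initial position rotated by $(|p| l - |q| k)(2 \pi / |p|)$ about its own centre, which preserves it setwise and yields $z_{|q| j} = z_{0 j}$; it is this periodicity of the trochoid, not the coprimality of $p$ and $q$, that makes the colouring single-valued.
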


\begin{proof}
First, we let $a_{ij}$ denote the arcs of $D(p, q)$ as depicted in Figure \ref{fig:torus_knot_diagram}, although $a_{i0}$ and $a_{i+1, |p|-1}$ (resp. $a_{|q|j}$ and $a_{0j}$) mark the same arc.

For each $i$ and $j$ ($0 \leq i \leq |q|$, $0 \leq j \leq |p| - 1$), let $z_{ij} \in \mathbb{C}$ be the coordinate of the vertex $v^{|p|, k}_{[i+j+1]}$ of $\Pi(|p|, k)$ after the $i$-th step of the $(|p|, k; |q|, l)$-trochoid.
Here, the $0$-th step means the arrangement of $\Pi(|p|, k)$ and $\Pi(|q|, l)$ into the initial position.
These $z_{ij}$ for the $(4, 1; 3, 1)$-trochoid are depicted in Figure \ref{fig:colors_derived_from_4_1_3_1_trochoid}, for example.

By definition, $z_{i0}$ is equal to $z_{i+1, |p| - 1}$ for each $i$ ($0 \leq i \leq |q| - 1$).
Further the rotation about $z_{i0}$ by angle $\theta(|p|, k; |q|, l)$ sends $z_{ij}$ to $z_{i+1, j-1}$ for each $i$ and $j$ ($0 \leq i \leq |q| - 1$, $1 \leq j \leq |p| - 1$).
Since $\Pi(|p|, k)$ returns to the initial position but is rotated by angle $(|p| l - |q| k) \cdot (2 \pi / |p|)$ after the $|q|$-th step, we have that $v^{|p|, k}_{[|q|]} = v^{|q|, l}_{[0]}$ and $v^{|p|, k}_{[|q| + 1]} = v^{|q|, l}_{[1]}$.
Therefore $z_{|q|j}$ is equal to $z_{0j}$ for each $j$ ($0 \leq j \leq |p| - 1$).
By the above argument, we have a non-trivial $\RotE{2}$-coloring $a_{ij} \mapsto (z_{ij}, \theta(|p|, k; |q|, l))$ of $D(p, q)$.
\begin{figure}[htbp]
\begin{center}
\includegraphics[scale=0.2]{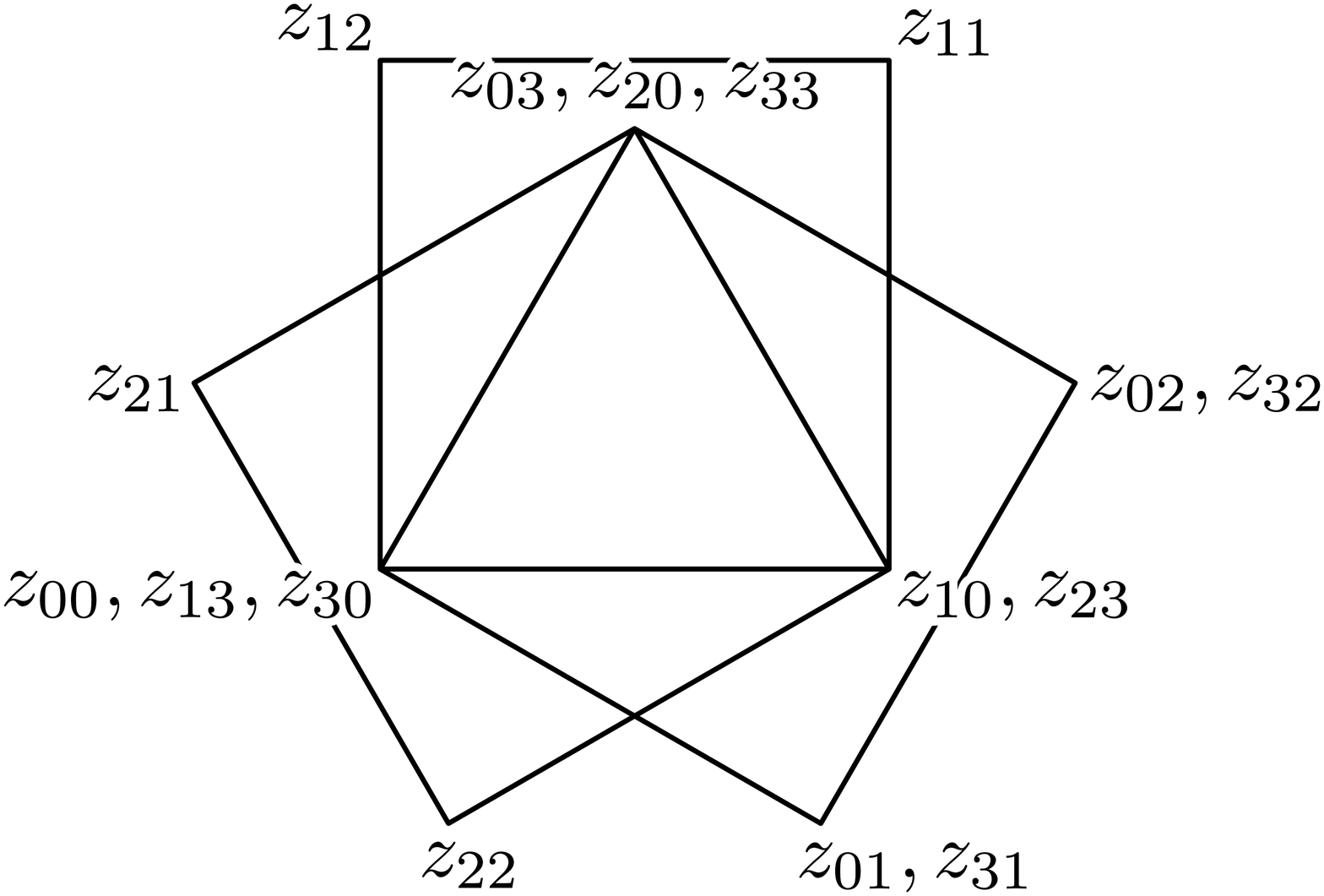}
\end{center}
\caption{}
\label{fig:colors_derived_from_4_1_3_1_trochoid}
\end{figure}
\end{proof}

We note that $(|p| - 1) (|q| - 1)$ complex numbers $e^{\theta(|p|, k; |q|, l) \sqrt{-1}}$ ($1 \leq k \leq |p| - 1$, $1 \leq l \leq |q| - 1$) are mutually distinct if $p$ and $q$ are coprime.
Further the span of the Alexander polynomial $\Delta_{T(p, q)}(t)$ of the $(p, q)$-torus knot is $(|p| - 1) (|q| - 1)$.
Therefore, in the light of Theorems \ref{thm:necessary_and_sufficient_condition} and \ref{thm:torus_knot_and_PL_trochoid}, we have the following corollary:

\begin{corollary}
\label{cor:factorization_of_the_Alexander_polynomial_of_the_torus_knot}
The Alexander polynomial of the $(p, q)$-torus knot is factorized with a certain $r \in \mathbb{Z}$ as follows:
\[
 \Delta_{T(p, q)}(t) = t^{r} \prod_{k = 1}^{|p| - 1} \prod_{l = 1}^{|q| - 1} (t - e^{\theta(|p|, k; |q|, l) \sqrt{-1}}).
\]
\end{corollary}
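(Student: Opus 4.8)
The plan is to combine the two main theorems to produce $(|p|-1)(|q|-1)$ distinct roots of $\Delta_{T(p,q)}(t)$ on the unit circle, and then to count degrees and see that there can be no others, so that the displayed factorization is forced.

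First I would observe, using Theorem \ref{thm:torus_knot_and_PL_trochoid}, that for each pair $(k,l)$ with $1 \le k \le |p|-1$ and $1 \le l \le |q|-1$ there is a non-trivial $\RotE{2}$-coloring of $D(p,q)$ whose rotation angles all equal $\theta(|p|,k;|q|,l)$. Applying the sharp (``more precisely'') form of Theorem \ref{thm:necessary_and_sufficient_condition}, the existence of such a coloring is equivalent to $\Delta_{T(p,q)}(e^{\theta(|p|,k;|q|,l)\sqrt{-1}}) = 0$. Hence every complex number $e^{\theta(|p|,k;|q|,l)\sqrt{-1}}$ is a root of $\Delta_{T(p,q)}(t)$.

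Next I would verify that these $(|p|-1)(|q|-1)$ numbers are mutually distinct. Rewriting $\theta(|p|,k;|q|,l) = 2\pi\left(\frac{l}{|q|} - \frac{k}{|p|}\right)$ shows that $e^{\theta(|p|,k;|q|,l)\sqrt{-1}}$ is the $|p||q|$-th root of unity $\exp\left(2\pi\sqrt{-1}\,\frac{l|p| - k|q|}{|p||q|}\right)$. If two pairs $(k_1,l_1)$ and $(k_2,l_2)$ yield the same value, then $(l_1-l_2)|p| \equiv (k_1-k_2)|q| \pmod{|p||q|}$; reducing this congruence modulo $|q|$ and modulo $|p|$ and using $\gcd(|p|,|q|)=1$ together with the bounds $|l_1-l_2| < |q|$ and $|k_1-k_2| < |p|$ forces $l_1 = l_2$ and $k_1 = k_2$. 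Thus we obtain exactly $(|p|-1)(|q|-1)$ distinct roots of $\Delta_{T(p,q)}(t)$.

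Finally I would compare this against the span of the Alexander polynomial. From the explicit formula $\Delta_{T(p,q)}(t) = \frac{(t^{pq}-1)(t-1)}{(t^p-1)(t^q-1)}$ one checks, using coprimality to cancel every denominator factor, that $\Delta_{T(p,q)}(t)$ is, up to the unit $t^r$, a monic polynomial of degree $(|p|-1)(|q|-1)$ with nonzero constant term, i.e.\ its span is $(|p|-1)(|q|-1)$. Since such a polynomial has at most $(|p|-1)(|q|-1)$ roots, the distinct roots found above must exhaust them, each being simple, and the monic factored form equals $\prod_{k,l}(t - e^{\theta(|p|,k;|q|,l)\sqrt{-1}})$; reinstating the Laurent indeterminacy as the factor $t^r$ gives the claim. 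The only point requiring care is this degree count: one must confirm that $\Delta_{T(p,q)}(t)$ genuinely attains span $(|p|-1)(|q|-1)$ with no zero at $0$, so that the unit-circle roots produced by the trochoids account for \emph{all} zeros with no room for higher multiplicity or hidden factors. This is precisely where the explicit rational expression and the coprimality of $p$ and $q$ carry the argument.
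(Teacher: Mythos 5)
Your argument is correct and is essentially the paper's own: the paper likewise combines Theorems \ref{thm:necessary_and_sufficient_condition} and \ref{thm:torus_knot_and_PL_trochoid} to produce the roots $e^{\theta(|p|,k;|q|,l)\sqrt{-1}}$, notes that these $(|p|-1)(|q|-1)$ numbers are mutually distinct when $p$ and $q$ are coprime, and compares with the span $(|p|-1)(|q|-1)$ of $\Delta_{T(p,q)}(t)$ to conclude the factorization. You simply supply the distinctness and degree computations in more detail than the paper does.
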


Suppose $X_{\theta}$ is the $n \times n$ matrix, defined in the proof of Theorem \ref{thm:necessary_and_sufficient_condition}, derived from $D(p, q)$ ($n = |q|(|p| - 1)$).
In the light of Corollary \ref{cor:factorization_of_the_Alexander_polynomial_of_the_torus_knot} and Remark \ref{rem:rank}, the rank of $X_{\theta}$ is $n - 2$ if $\theta = \theta(|p|, k; |q|, l)$ with some $k$ and $l$, and is $n - 1$ otherwise.
Therefore, in the light of Remark \ref{rem:similarity}, all non-trivial $\RotE{2}$-colorings of the $(p, q)$-torus knot diagram $D(p, q)$ are essentially enumerated by Theorem \ref{thm:torus_knot_and_PL_trochoid}.

\section{Related topics}
\label{sec:related_topics}

We close this paper with a discussion of some related topics.

Let $\RotE{n}$ be the set consisting of all rotations of the $n$-dimensional Euclidean space ($n \geq 3$).
Obviously, $\RotE{n}$ is equipped with a conjugation quandle structure.
Although it is not $\RotE{2}$-colorable as we have seen in Example \ref{ex:figure_eight_knot}, the figure eight knot is $\RotE{3}$-colorable.
Indeed, it is known that the figure eight knot is colorable by the \emph{tetrahedral quandle} which is a subquandle of $\RotE{3}$.
See \cite{Car2012}, for example.
Thus it seems to be natural that we have the following questions:

\begin{question}
For a non-trivial knot $K$ whose Alexander polynomial $\Delta_{K}(t)$ has no roots on the unit circle in $\mathbb{C}$, which $n$ is the minimum number so that $K$ is $\RotE{n}$-colorable?
\end{question}

\begin{question}
Is there a non-trivial knot which is not $\RotE{n}$-colorable for any $n$?
\end{question}

\begin{question}
For $n \geq 3$, what is meaning of $\RotE{n}$-colorability?
For example, are there relationships between $\RotE{n}$-colorability and other knot invariants?
\end{question}

The set consisting of all reflections of the Euclidean space is obviously equipped with a conjugation quandle structure.
Further the sets consisting of all rotations or reflections of spherical or hyperbolic spaces are also equipped with conjugation quandle structures.

\begin{question}
Which knots are colorable by these ``geometric'' quandles?
\end{question}

\begin{question}
What is meaning of colorability by the quandles?
\end{question}

A coloring of a (multi-component) link diagram by a quandle is defined in a similar way for a knot diagram.
A map from the arcs of a link diagram to $\RotE{2}$ sending the arcs of the $i$-th component to a fixed $(w, e^{\theta_{i} \sqrt{-1}})$ satisfies the coloring condition, although $\theta_{i} \neq \theta_{j}$ for some $i \neq j$.
Thus a link is said to be $\RotE{2}$-colorable if there is a $\RotE{2}$-coloring of its diagram other than all the above.

\begin{question}
Which links are $\RotE{2}$-colorable?
Further, which links are colorable by the other quandles?
\end{question}

\begin{question}
What is meaning of colorability of links by $\RotE{2}$ and the others?
\end{question}

\bibliographystyle{amsplain}

\end{document}